\documentclass[12pt]{amsart}
\synctex = 1

\usepackage{amsmath}
\usepackage{amssymb}
\usepackage{amsfonts}
\usepackage{amsthm}
\usepackage{xcolor}
\usepackage{enumerate}

\newtheorem{theo}{Theorem}[section]
\newtheorem*{theo*}{Theorem}

\newtheorem{lemm}[theo]{Lemma}

\newcommand{\N}{\mathbb{N}}
\newcommand{\Z}{\mathbb{Z}}

\newcommand{\C}{\mathbb{C}}

\title{A Menon-type Identity derived using Cohen-Ramanujan sum}

%%%%%%%%%%%%%%Start copying to IMjournal from below
\begin{document}
\keywords{Menon's identity, Euler totient function, generalized gcd, Jordan totient function, Klee's function, Ramanujan sum, Cohen-Ramanujan sum}
\subjclass[2010]{11A07, 11A25, 20D60, 20D99}
\author[A Chandran]{Arya Chandran}
\address{Department of Mathematics, University College, Thiruvananthapuram, Kerala - 695034, India}
\email{aryavinayachandran@gmail.com}
\author[K V Namboothiri]{K Vishnu Namboothiri}
\address{Department of Mathematics, Government College, Chittur, Palakkad, Kerala - 678104, INDIA\\Department of Collegiate Education, Government of Kerala, India}
\email{kvnamboothiri@gmail.com}

 \begin{abstract}
  Menon's identity is a classical identity involving gcd sums and the Euler totient function $\phi$. We derived the Menon-type  identity $\sum\limits_{\substack{m=1\\(m.n^s)_s=1}}^{n^s} (m-1,n^s)_s=\Phi_s(n^s)\tau_s(n^s)$  in [\textit{Czechoslovak Math. J., 72(1):165–176 (2022)}] where $\Phi_s$ denotes the Klee's function and $(a,b)_s$ denotes a a generalization of the gcd function. Here we give an alternate method to derive this identity using the concept of Cohen-Ramanujan sum.
  \end{abstract}

 \maketitle

 \section{Introduction}
 The classical Menon's identity which originally appeared in \cite{menon1965sum} is a gcd sum turning out to be equal to a product of the Euler totient function $\phi$ and the number of divisors function $\tau$. If $(m,n)$ denotes the gcd of $m$ and $n$, the identity is precisely the following:
\begin{align}\label{menons-identity}
\sum\limits_{\substack{m=1\\(m.n)=1}}^n (m-1,n)=\phi(n)\tau(n).
\end{align}
It has been generalized and extended by many authors. For example, see the papers \cite{li2017menon} \cite{sury2009some}, \cite{toth2018menon} and \cite{zhao2017another}.
For positive integer $s$, integers $a,b$, not both zero, the largest $l^s$ (where $l\in \N$) dividing both $a$ and $b$ denoted by  $(a,b)_{s}$ is called the generalized gcd of $a$ and $b$ (following E. Cohen \cite{cohen1956some}). When $s=1$, this will be equal to the usual gcd of $a$ and $b$.

For positive integers $s$ and $n$, the Klee's function  $\Phi_{s}(n)$ is defined to give the cardinality of the  set $\{m\in\N : 1\leq m\leq n, (m,n)_s=1\}$.

Li and Kim  gave a generalization of the identity (\ref{menons-identity}) in  \cite{li2017menon} generalizing  which further we we proved the following result.

\begin{theo}\cite[Theorem 3.2]{chandran2022menon}
Let $m_1,m_2,\cdots,m_k, b_1,b_2,\cdots, b_r,n,s \in \N$ and $a_1,a_2,\cdots, a_k \in \Z$ be such that $(a_i,n^s)_s=1$, $i = 1,2,\cdots,k$. Then
\begin{align}
\sum\limits_{\substack{1\leq m_1, m_2,\ldots,m_k\leq n^s\\(m_1,n^s)_s=1\\(m_2,n^s)_s=1\\\cdots \\(m_k,n^s)_s=1\\1\leq b_1, b_2,\ldots,b_r\leq n^s}}(m_1-a_1, m_2-a_2,&\ldots,m_k-a_k,b_1,b_2,\cdots,b_r,n^s)_s\\
&= \Phi_s(n^s)^k\sum\limits_{\substack{d^s|n^s}}\frac{(d^s)^r}{\Phi_s(\frac{n^s}{d^s})^{k-1}}.\nonumber
\end{align}
\end{theo}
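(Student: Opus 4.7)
My plan is to reduce the generalized gcd sum to a weighted count of tuples by expanding via the Klee identity $\sum_{d^s\mid n^s}\Phi_s(d^s)=n^s$, and then to evaluate the resulting count of the $m_i$'s by means of the Cohen-Ramanujan sum. Concretely, since $(m_1-a_1,\ldots,m_k-a_k,b_1,\ldots,b_r,n^s)_s=\sum\Phi_s(d^s)$ summed over those $d^s$ that divide every one of the listed coordinates, I would interchange the order of summation and collect the constraints on each variable separately. The left-hand side then takes the shape
\[\sum_{d^s\mid n^s}\Phi_s(d^s)\,\Big(\tfrac{n^s}{d^s}\Big)^{\!r}\prod_{i=1}^k B_i(d),\]
where $B_i(d):=\#\{m:1\le m\le n^s,\ (m,n^s)_s=1,\ m\equiv a_i\pmod{d^s}\}$; the factor $(n^s/d^s)^r$ records the freedom in choosing each $b_j$ from the multiples of $d^s$ in $[1,n^s]$.

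The main obstacle is the evaluation of $B_i(d)$, which is where the Cohen-Ramanujan machinery enters. To detect the congruence I would use the additive-character orthogonality $[m\equiv a_i\pmod{d^s}]=\tfrac{1}{d^s}\sum_{j=0}^{d^s-1}e^{2\pi ij(m-a_i)/d^s}$, swap with the sum over $m$ with $(m,n^s)_s=1$, and substitute $j'=jn^s/d^s$ so that the inner sum becomes precisely the Cohen-Ramanujan sum
\[C_s(j',n)=\sum_{\substack{1\le h\le n^s\\(h,n^s)_s=1}}e^{2\pi i hj'/n^s}.\]
Applying the H\"older-type evaluation of $C_s$ available from Cohen's work and using the hypothesis $(a_i,n^s)_s=1$ to control the character sum over $j$ should yield the clean formula $B_i(d)=\Phi_s(n^s)/\Phi_s(d^s)$. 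This is the delicate part: the coprimality of $a_i$ with $n^s$ has to be translated into the cancellation of all but the ``principal'' contribution in the $C_s$-expansion, and I would model the argument on the classical reduction that proves Menon's original identity \eqref{menons-identity} via standard Ramanujan sums.

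Once $B_i(d)=\Phi_s(n^s)/\Phi_s(d^s)$ is in hand, the remainder is bookkeeping. Substitution gives
\[\Phi_s(n^s)^k\sum_{d^s\mid n^s}\frac{(n^s/d^s)^r}{\Phi_s(d^s)^{k-1}},\]
and the reindexing $d^s\mapsto n^s/d^s$, which merely permutes the set of $s$-th-power divisors of $n^s$, transforms this into the right-hand side as stated. Compared with the structural proof in \cite{chandran2022menon}, the present route isolates the analytic content entirely in the Cohen-Ramanujan step, furnishing the advertised alternate derivation.
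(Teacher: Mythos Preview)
The present paper does \emph{not} prove this theorem: it is quoted from \cite{chandran2022menon} as background, and the only statement actually established here via Cohen--Ramanujan sums is the special case $k=1$, $r=0$, i.e.\ Theorem~\ref{theo1}. So there is no ``paper's own proof'' of the general identity to compare your proposal against; at best one can compare your outline, specialised to $k=1$, $r=0$, with the argument the paper gives for Theorem~\ref{theo1}.

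On that restricted comparison the two routes are genuinely different. You expand the generalized gcd through the Klee identity $\sum_{d^s\mid n^s}\Phi_s(d^s)=n^s$ \emph{first} and reduce everything to the single counting lemma $B(d)=\#\{m\le n^s:(m,n^s)_s=1,\ m\equiv a\pmod{d^s}\}=\Phi_s(n^s)/\Phi_s(d^s)$, with the Cohen--Ramanujan sum entering only inside that lemma. The paper instead introduces an auxiliary $b\equiv a-1\pmod{n^s}$, detects the congruence by orthogonality on the full modulus $n^s$, and is left with a product of three pieces: the Cohen--Ramanujan sum $c_n^s(j)$ from the $a$-sum, the weighted exponential sum $\sum_b(b,n^s)_s e^{2\pi ibj/n^s}$ handled by Lemma~\ref{lem3}, and a bare twist $e^{2\pi ij/n^s}$; assembling these requires CRS2, CRS4 and Lemma~\ref{lem1}. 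Your scheme is more modular and, once $B_i(d)$ is known, immediately yields the full $k,r$ version, something the paper's global manipulation does not obviously do.

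The thin spot in your sketch is exactly the one you flag. After the substitution $j'=j\,n^s/d^s$ one has $B_i(d)=d^{-s}\sum_{j=0}^{d^s-1}e^{-2\pi ija_i/d^s}\,c_n^s\bigl(j(n/d)^s\bigr)$, and by CRS2 the value $c_n^s\bigl(j(n/d)^s\bigr)$ depends on $\bigl(j(n/d)^s,n^s\bigr)_s$, which is \emph{not} simply $(j,d^s)_s$ because $j$ is not an $s$-th power in general. You can still push this through by grouping the $j$-sum according to that generalized gcd and recognising the resulting inner sum as a Cohen--Ramanujan sum $c_{d'}^s(a_i)$ for suitable $d'\mid d$, whereupon the hypothesis $(a_i,n^s)_s=1$ collapses it to $\mu(d')$ via CRS4 and a M\"obius sum finishes; but this is more than a one-line substitution and should be written out. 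A cleaner alternative is to prove $B_i(d)=\Phi_s(n^s)/\Phi_s(d^s)$ multiplicatively (it is immediate prime-by-prime once $(a_i,n^s)_s=1$) and reserve the analytic machinery for the parts of the argument that genuinely need it.
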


As a corollary to this, we derived the Menon-type identity  {{\cite[Corollary 3.1]{chandran2022menon}}}: \begin{align}\label{this-paper-idn}
                                                                                       \sum\limits_{\substack{m=1\\(m.n^s)_s=1}}^{n^s} (m-1,n^s)_s=\Phi_s(n^s)\tau_s(n^s).
                                                                                      \end{align}

   The Ramanujan sum denoted by $c_r(n)$  \cite{ramanujan1918certain}  is defined to be the sum of certain powers of a primitive $r$th root of unity. That is,
\begin{align*}
c_r(n)&=\sum\limits_{\substack{{m=1}\\(m,r)=1}}^{r}e^{\frac{2 \pi imn}{r}}
\end{align*}
 where $r\in\mathbb{N}$ and $n \in \mathbb{Z}$.

    Ramanujan sum has been generalized in many ways. E. Cohen \cite{cohen1949extension} defined
\begin{align}\label{Coh1}
c_r^s(n)&=\sum\limits_{\substack{{(h,r^s)_s=1}\\h=1}}^{r^s}e^{\frac{2\pi i n h}{r^s}}.
\end{align}
When $s=1$, this reduces to the Ramanujan sum. This sum will be henceforth  called as the Cohen-Ramanujan sum. Some other important properties of this sum is given in the next section.

Our proof of the above theorem \cite{chandran2022menon} used elementary number theory techniques. The original proof of the Menon's identity was also given using elementary number theory technques. In \cite{toth2021proofs}, L. T\'{o}th gave a proof of the classical Menon's identity (\ref{menons-identity}) using the concept of Ramanujan sum. This was surprising as most of the other related identities were derived using either elementary number theory techniques, group theory or some sort of character theory. In this paper we derive the Menon-type identity (\ref{this-paper-idn}) using the properties of the generalized gcd function and the Cohen-Ramanujan sum.

\section{Notations and basic results}
 A postive integer $a$ is said to be $s-$power free or $s-$free if no $l^s$ where $l\in \N$  divides $a$.

 The Euler totient function $\phi$ has several generalizations of which three are relevant to us. They are the Jordan totient function, Cohen totient function and the Klee's function. The Jordan totient function $J_s(n)$   \cite[pp 95-97]{jordan1870traite} defined for positive integers $s$ and $n$ gives the number of ordered sets of $s$ elements from a complete residue system (mod $n$) such that the greatest common divisor of each set is prime to $n$. The Cohen totient  function $\phi_s$  \cite{cohen1956some} is defined as follows. If $(a,b)_s=1$, then $a$ and $b$ are said to be relatively $s-$prime. The subset $N$ of a complete residue system $M$ (mod $n^s$) consisting of all elements of $M$ that are relatively $s-$prime to $n^s$ is called an $s-$reduced residue system (mod $n$). Cohen totient function $\phi_s(n)$ gives the number of elements of an $s-$reduced residue system (mod $n$).

Some of the important properties of the Klee's function $\Phi_s$ are listed in \cite[Section 2]{chandran2022menon}. Most notably, it has a product formula
   \begin{align}
    \Phi_s(n)=n\prod\limits_{\substack{p^s\mid n\\
  p \text{ prime}}}\left(1-\frac{1}{p^s}\right).
     \end{align}
 In \cite[Theorem 3]{cohen1956some}, Cohen showed that $\phi_s(n)=n^s \prod\limits_{\substack{p^s\mid n\\
  p \text{ prime}}}  (1-\frac{1}{p^s})$. So  $\phi_s(n) = \Phi_s(n^s)$. Further $J_s$ and $\phi_s$  are related by the identity $J_s(n)=\phi_s(n)$  \cite[Theorem 5]{cohen1956some}.   Note also that $\Phi_1 = J_1=\phi_1 = \phi$.

 The Cohen-Ramanujan sum $ c_r^{(s)}(n)$ definition of which  is given in the introduction above has the following properties.
 
 \begin{enumerate}[{CRS}1) ]
 \item $ c_r^{(s)}(n)=\sum\limits_{\substack{d|r\\d^s|n}}\mu(r/d)d^s$  where $\mu$ is the M\"{o}bius function \cite{cohen1949extension}.
\item $ c_r^{(s)}(n)=\frac{\phi_s(r)\mu(d)}{\phi_s(d)}=\frac{\Phi_s(r^s)\mu(d)}{\Phi_s(d^s)}$ where $d^s=\frac{r^s}{(n,r^s)_s}$ \cite{cohen1956extension}.
 \item $c_r^s(n)=c_r^s(-n)$.
 \item $c_r^s(1)=\mu(r)$.
 \end{enumerate}

   \section{Main Results}

We will derive the Menon-type identity (\ref{this-paper-idn}) in this section.  To begin with, we prove three lemmas.
   
 \begin{lemm}\label{lem1}
 For positive integers $n,s$, we have
 $\frac{n^s}{\Phi_s(n^s)}=\sum\limits_{\substack{d|n}}\frac{(\mu(d))^2}{\Phi_s(d^s)}$.
 \end{lemm}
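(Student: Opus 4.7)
The plan is to prove the identity by reducing both sides to a product over primes dividing $n$, using the explicit product formula for $\Phi_s$ recalled in Section~2. Specifically, since $p^s \mid n^s$ iff $p \mid n$, the formula gives
\begin{equation*}
\Phi_s(n^s) = n^s \prod_{p \mid n} \left(1 - \frac{1}{p^s}\right),
\end{equation*}
so that
\begin{equation*}
\frac{n^s}{\Phi_s(n^s)} = \prod_{p \mid n} \frac{p^s}{p^s-1} = \prod_{p \mid n} \left(1 + \frac{1}{\Phi_s(p^s)}\right),
\end{equation*}
where I am using $\Phi_s(p^s) = p^s - 1$.

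Next I would expand the product on the right by selecting a subset of the primes dividing $n$, one subset for each term. A subset $S \subseteq \{p \text{ prime} : p \mid n\}$ corresponds bijectively to a squarefree divisor $d = \prod_{p \in S} p$ of $n$, and by the multiplicativity of $\Phi_s$ on the squarefree integer $d$, one has $\prod_{p \in S} \Phi_s(p^s) = \Phi_s(d^s)$. This gives
\begin{equation*}
\frac{n^s}{\Phi_s(n^s)} = \sum_{\substack{d \mid n \\ d \text{ squarefree}}} \frac{1}{\Phi_s(d^s)} = \sum_{d \mid n} \frac{\mu(d)^2}{\Phi_s(d^s)},
\end{equation*}
where the last step uses that $\mu(d)^2$ is $1$ on squarefree divisors and $0$ otherwise.

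An equivalent route, if one prefers, is to observe that $d \mapsto \mu(d)^2/\Phi_s(d^s)$ is multiplicative, so its divisor sum is multiplicative in $n$; then one only checks the identity on prime powers $n = p^k$, where the right-hand side collapses to $1 + 1/(p^s - 1) = p^s/(p^s-1)$ and the left-hand side, by the product formula, also equals $p^s/(p^s-1)$. There is no real obstacle here; the only point that needs care is making sure the product formula is applied with the correct range of primes (those dividing $n$, not $n^s$) and that the multiplicativity of $\Phi_s$ is invoked only on squarefree arguments where no subtle interaction with the parameter $s$ arises.
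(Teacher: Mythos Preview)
Your proof is correct and follows essentially the same route as the paper: both start from the product formula $\Phi_s(n^s)=n^s\prod_{p\mid n}(1-1/p^s)$, rewrite $n^s/\Phi_s(n^s)$ as $\prod_{p\mid n}\bigl(1+\tfrac{1}{p^s-1}\bigr)=\prod_{p\mid n}\bigl(1+\tfrac{\mu(p)^2}{\Phi_s(p^s)}\bigr)$, and then expand the product over squarefree divisors of $n$. Your write-up is slightly more explicit about the subset/squarefree-divisor correspondence and the use of multiplicativity of $\Phi_s$, but the argument is the same.
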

 \begin{proof}
  Let $n=p_1^{a_1}p_2^{a_2}\cdots p_k ^{a_k}$. Then
  \begin{align*}
\frac{n^s}{\Phi_s(n^s)}&=\frac{1}{\prod\limits_{\substack{p^s\mid n^s\\
  p \text{ prime}}}\left(1-\frac{1}{p^s}\right)}\\
  &= \prod\limits_{\substack{p^s\mid n^s\\
  p \text{ prime}}} (1+\frac{1}{p^s-1})\\
  &=  \left(1+\frac{1}{p_1^s-1}\right)\left(1+\frac{1}{p_2^s-1}\right)\cdots \left(1+\frac{1}{p_k^s-1}\right)\\
     &= \prod\limits_{\substack{i=1}}^{k}\left( 1+\frac{(\mu(p_i))^2}{\Phi_s(p_i^s)}\right)\\
&= \sum\limits_{\substack{d|n}}\frac{(\mu(d))^2}{\Phi_s(d^s)}.
\end{align*}
  
 \end{proof}

 An element in $\mathbb{N}^2$ will be denoted by $(a;b)$ to avoid any confusion with the notation of gcd of $a$ and $b$. The next result generalizes  \cite[Proposition 1]{toth2011weighted}.
 
  \begin{lemm}\label{lem2}
 Let $\psi: \N^2\rightarrow \C$ be an arbitrary function and $k,n,s$ positive integers. Then we have$$\sum\limits_{\substack{k=1}}^{n^s}\psi(k;n^s)(k,n^s)_s= \sum\limits_{\substack{d^s\mid n^s}}\Phi_s(n^s)\sum\limits_{\substack{j=1}}^{\frac{n^s}{d^s}}\psi(d^sj;n^s).$$

 \end{lemm}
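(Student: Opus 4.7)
The plan is to expand $(k,n^s)_s$ as a divisor sum involving Klee's function and then swap the order of summation, in direct analogy with T\'{o}th's proof of the classical weighted version. The preliminary identity I would establish is
\[
m^s = \sum_{d^s\mid m^s}\Phi_s(d^s)
\]
for every positive integer $m$. Both sides are multiplicative in $m$, so it suffices to verify it on a prime power $m=p^a$. Using the product formula $\Phi_s(p^{bs}) = p^{bs}(1-p^{-s})$ for $b\ge 1$ and $\Phi_s(1)=1$, the sum telescopes as $1+\sum_{b=1}^{a}\bigl(p^{bs}-p^{(b-1)s}\bigr)=p^{as}$, as required.

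Next, since $(k,n^s)_s$ is by definition the largest $s$-th power dividing both $k$ and $n^s$, it is itself a perfect $s$-th power, so the preceding identity applies with $m^s=(k,n^s)_s$, giving
\[
(k,n^s)_s = \sum_{d^s\mid (k,n^s)_s}\Phi_s(d^s) = \sum_{\substack{d^s\mid k\\ d^s\mid n^s}}\Phi_s(d^s).
\]
Substituting this into the left-hand side of the lemma and interchanging the order of summation, I obtain
\[
\sum_{k=1}^{n^s}\psi(k;n^s)(k,n^s)_s = \sum_{d^s\mid n^s}\Phi_s(d^s)\sum_{\substack{1\le k\le n^s\\ d^s\mid k}}\psi(k;n^s).
\]
Reparametrising the inner sum by $k=d^s j$ with $1\le j\le n^s/d^s$ then produces the claimed expression (with $\Phi_s(d^s)$ in place of the weight in the stated form).

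There is no genuine obstacle here: once the divisor-sum expansion of $(k,n^s)_s$ is in hand, everything else is a routine exchange of summation and change of variables. The only point requiring attention is the justification that $(k,n^s)_s$ is a perfect $s$-th power so that the expansion $m^s=\sum_{d^s\mid m^s}\Phi_s(d^s)$ can be applied to it, but this is immediate from the definition of the generalised gcd. This mirrors exactly the role played by the classical identity $(k,n)=\sum_{d\mid(k,n)}\phi(d)$ in the $s=1$ case.
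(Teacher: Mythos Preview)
Your proof is correct and follows essentially the same route as the paper: both expand $(k,n^s)_s$ via the divisor-sum identity $m^s=\sum_{d^s\mid m^s}\Phi_s(d^s)$, split the condition $d^s\mid(k,n^s)_s$ into $d^s\mid k$ and $d^s\mid n^s$, and reparametrise $k=d^sj$. The only minor difference is that the paper obtains the preliminary identity by citing $n^s=\sum_{d\mid n}J_s(d)$ together with $J_s(n)=\Phi_s(n^s)$, whereas you prove it directly via multiplicativity and a telescoping check on prime powers. You are also right that the stated weight $\Phi_s(n^s)$ is a typo for $\Phi_s(d^s)$, as the paper's own proof confirms.
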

 \begin{proof}
 
 We have  $n^s =  \sum\limits_{d|n}J_s(d)$ \cite[Section V.3]{sivaramakrishnan1988classical} and $J_s(n) = \Phi_s(n^s)$. Hence $n^s =  \sum\limits_{d|n}\Phi_s(d^s)=\sum\limits_{d^s|n^s}\Phi_s(d^s)$. So
 
 \begin{align*}
 \sum\limits_{\substack{k=1}}^{n^s}\psi(k;n^s)(k,n^s)_s&=
 \sum\limits_{\substack{k=1}}^{n^s}\psi(k;n^s)\sum\limits_{d^s|(k,n^s)_s}\Phi_s(d^s)\\
 &=\sum\limits_{\substack{k=1}}^{n^s}\psi(k;n^s)\sum\limits_{\substack{d^s\mid k\\d^s\mid n^s}}\Phi_s(d^s).
 \end{align*}
 Since $d^s\mid k$ and $1\leq k\leq n^s$, we may write $k=d^sj $ where $1\leq j\leq \frac{n^s}{d^s}$ and hence we get the first part of the lemma $ \sum\limits_{\substack{k=1}}^{n^s}\psi(k;n^s)(k,n^s)_s=
 \sum\limits_{\substack{d^s\mid n^s}}\Phi_s(d^s) 
 \sum\limits_{\substack{j=1}}^{\frac{n^s}{d^s}}\psi(d^sj;n^s)$.

 \end{proof}
 
  \begin{lemm}\label{lem3}For $n,s,j$ positive integers, we have that
$$\sum\limits_{\substack{b=1}}^{n^s}(b,n^s)_se^{\frac{2\pi ibj}{n^s}}=\sum\limits_{\substack{d^s\mid (j,n^s)_s}}d^s\Phi_s(\frac{n^s}{d^s}).$$
 \end{lemm}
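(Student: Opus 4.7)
The plan is to apply Lemma \ref{lem2} with the specific choice $\psi(b;n^s)=e^{2\pi i b j/n^s}$, then evaluate the resulting inner geometric sum, and finally perform a change of summation variable to match the right-hand side.

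First, I would invoke Lemma \ref{lem2} with $\psi(b;n^s)=e^{2\pi i b j/n^s}$ to rewrite the left-hand side as
\begin{align*}
\sum_{b=1}^{n^s}(b,n^s)_s e^{\frac{2\pi i b j}{n^s}}
=\sum_{d^s\mid n^s}\Phi_s(d^s)\sum_{k=1}^{n^s/d^s}e^{\frac{2\pi i d^s k j}{n^s}}.
\end{align*}
Writing $N=n^s/d^s$, the inner sum becomes $\sum_{k=1}^{N}e^{2\pi i k j/N}$, a geometric sum over a complete residue system modulo $N$. By the standard orthogonality relation for roots of unity, this equals $N=n^s/d^s$ when $N\mid j$ and vanishes otherwise.

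The surviving terms in the outer sum are therefore exactly those $d$ with $d\mid n$ and $(n^s/d^s)\mid j$, contributing
\begin{align*}
\sum_{\substack{d\mid n\\(n^s/d^s)\mid j}}\Phi_s(d^s)\cdot\frac{n^s}{d^s}.
\end{align*}
I would then substitute $e=n/d$, so that $d\mid n$ becomes $e\mid n$, the quantity $n^s/d^s$ becomes $e^s$, and the factor $\Phi_s(d^s)$ becomes $\Phi_s(n^s/e^s)$. The conditions $e\mid n$ and $e^s\mid j$ together are equivalent to $e^s\mid(j,n^s)_s$ (using that $(j,n^s)_s$ is by definition the largest $s$-th power dividing both $j$ and $n^s$). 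Renaming $e$ back to $d$ yields the claimed expression $\sum_{d^s\mid(j,n^s)_s} d^s\Phi_s(n^s/d^s)$.

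I do not foresee a serious obstacle: the only delicate point is the bookkeeping in the change of variables $d\mapsto n/d$ and the verification that the pair of conditions \emph{$d\mid n$} and \emph{$d^s\mid j$} really captures divisibility of the generalized gcd $(j,n^s)_s$ by $d^s$. This follows directly from the definition of $(\cdot,\cdot)_s$ as the largest $s$-th power common divisor, but it is worth stating explicitly since the asymmetric role of the exponent $s$ in $(j,n^s)_s$ (where $n^s$ is already an $s$-th power while $j$ need not be) makes it the sort of point where sign or exponent errors can sneak in.
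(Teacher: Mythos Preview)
Your proposal is correct and follows essentially the same route as the paper's own proof: apply Lemma~\ref{lem2} with $\psi(b;n^s)=e^{2\pi i b j/n^s}$, evaluate the inner geometric sum via orthogonality, and then perform the change of variable $d\mapsto n/d$ to arrive at the stated sum over $d^s\mid(j,n^s)_s$. Your explicit justification of the equivalence between ``$e\mid n$ and $e^s\mid j$'' and ``$e^s\mid(j,n^s)_s$'' is a point the paper leaves implicit, so your write-up is, if anything, slightly more careful.
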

 
 \begin{proof}
 Define $\psi(b ;n)=e^{\frac{2\pi ibj}{n}}$ so that
 \begin{align*}
 \sum\limits_{\substack{b=1}}^{n^s}(b,n^s)_se^{\frac{2\pi ibj}{n^s}}&
 =\sum\limits_{\substack{b=1}}^{n^s}(b,n^s)_s\psi(b;n^s)\\&
  =\sum\limits_{\substack{d^s\mid n^s}}\Phi_s({d^s})\sum\limits_{\substack{k=1}}^{\frac{n^s}{d^s}}\psi({k},{\frac{n^s}{d^s}})\\&
 =\sum\limits_{\substack{d^s\mid n^s}}\Phi_s({d^s})\sum\limits_{\substack{k=1}}^{\frac{n^s}{d^s}}e^{\frac{2\pi i kj}{\frac{n^s}{d^s}}}.
 \end{align*}
 Note that $\frac{1}{d^s}\sum\limits_{\substack{k=1}}^{d^s}e^{\frac{2\pi i kn}{d^s}}=\begin{cases}1 \text{ if } d^s \mid n\\
 0 \text{ otherwise } .
 \end{cases}$

 So
 \begin{align*}
 \sum\limits_{\substack{b=1}}^{n^s}(b,n^s)_se^{\frac{2\pi ibj}{n^s}}&= \sum\limits_{\substack{d^s\mid n^s\\ \frac{n^s}{d^s}\mid j}}\Phi_s({d^s}){\frac{n^s}{d^s}}\\&
 =\sum\limits_{\substack{d^s\mid n^s\\ {d^s}\mid j}}\Phi_s({\frac{n^s}{d^s}}){{d^s}}\\&
 =\sum\limits_{\substack{ {d^s}\mid (j,n^s)_s}}\Phi_s({\frac{n^s}{d^s}}){{d^s}}. 
 \end{align*}

 \end{proof}

 Now we are ready to derive identity (\ref{this-paper-idn}) which we restate below.
 \begin{theo}\label{theo1}
 For $n,s$ positive integers, we have that
  $$\sum\limits_{\substack{a=1\\(a,n^s)_s=1}}^{n^s} (a-1,n^s)_s=\Phi_s(n^s)\tau_s(n^s).$$
 \end{theo}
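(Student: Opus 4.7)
The plan is to expand $(a-1,n^s)_s$ as a discrete Fourier series on $\Z/n^s\Z$ whose Fourier coefficients are supplied by Lemma \ref{lem3}, substitute into the sum defining $S$, and then collapse the resulting expression using the Cohen--Ramanujan sum properties together with Lemma \ref{lem1}. Concretely, Lemma \ref{lem3} identifies
$$\widehat{f}(j) := \sum_{b=1}^{n^s}(b,n^s)_s\,e^{2\pi i bj/n^s} = \sum_{d^s\mid (j,n^s)_s}d^s\Phi_s(n^s/d^s)$$
as the discrete Fourier transform of $b\mapsto(b,n^s)_s$, so Fourier inversion recovers $(a-1,n^s)_s = \frac{1}{n^s}\sum_{j=1}^{n^s}\widehat{f}(j)\,e^{-2\pi i(a-1)j/n^s}$. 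Substituting into $S$ and interchanging summations, the inner sum $\sum_{a=1,(a,n^s)_s=1}^{n^s}e^{-2\pi i aj/n^s}$ is, by the definition (\ref{Coh1}) together with CRS3, equal to $c_n^{(s)}(j)$, and hence $S = \frac{1}{n^s}\sum_{j=1}^{n^s}\widehat{f}(j)\,e^{2\pi ij/n^s}\,c_n^{(s)}(j)$.

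Next I would partition the outer sum over $j$ by the value $h^s := (j,n^s)_s$ with $h\mid n$. For each such $h$, CRS2 gives $c_n^{(s)}(j) = \Phi_s(n^s)\mu(n/h)/\Phi_s((n/h)^s)$ and $\widehat{f}(j) = \sum_{d\mid h}d^s\Phi_s((n/d)^s)$, both depending only on $h$. The residual exponential sum $\sum_{j:(j,n^s)_s=h^s,\,1\le j\le n^s}e^{2\pi ij/n^s}$ transforms, via the bijective change of variable $j = h^s j'$ with $(j',(n/h)^s)_s=1$ and $1\le j'\le(n/h)^s$, into exactly $c_{n/h}^{(s)}(1)$, which equals $\mu(n/h)$ by CRS4. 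Relabeling $e=n/h$, the collected expression becomes
$$S = \frac{\Phi_s(n^s)}{n^s}\sum_{e\mid n}\frac{(\mu(e))^2}{\Phi_s(e^s)}\sum_{d\mid n/e}d^s\,\Phi_s((n/d)^s).$$

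To finish, I would swap the two sums so that $d$ ranges over divisors of $n$ and, for each $d$, $e$ ranges over divisors of $n/d$. Applying Lemma \ref{lem1} with $n/d$ in place of $n$ evaluates the inner $e$-sum as $(n/d)^s/\Phi_s((n/d)^s)$, cancelling the $\Phi_s((n/d)^s)$ factor and leaving $\sum_{d\mid n}d^s(n/d)^s = \sum_{d\mid n}n^s = n^s\tau_s(n^s)$, from which $S = \Phi_s(n^s)\tau_s(n^s)$ follows at once. The main obstacle is the bookkeeping in the partitioning step: checking that $j\mapsto h^sj'$ is a bijection from $\{j:(j,n^s)_s=h^s,\,1\le j\le n^s\}$ onto the summation range of $c_{n/h}^{(s)}(1)$ uses the definition of the generalized gcd $(\cdot,\cdot)_s$ in an essential way, and it is precisely here that the two independent factors of $\mu(n/h)$ (one coming from CRS2, the other from CRS4) combine into the $(\mu(n/h))^2$ that Lemma \ref{lem1} is tailored to absorb.
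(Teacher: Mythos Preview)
Your proposal is correct and follows essentially the same route as the paper: the paper introduces an auxiliary variable $b$ and the orthogonality relation $\frac{1}{n^s}\sum_j e^{2\pi ij(1-a+b)/n^s}$, which is exactly your Fourier inversion step, and then proceeds (as you do) by recognizing the $a$-sum as $c_n^{(s)}(j)$, invoking Lemma~\ref{lem3} for the $b$-sum, partitioning over $(j,n^s)_s$, applying CRS2 and CRS4 to produce the $(\mu(n/h))^2$ factor, and finishing with Lemma~\ref{lem1}. The only cosmetic difference is that the paper organizes the final double sum via a triple factorization $n^s=d^sk^sh^s$ before regrouping, whereas you swap the $d$ and $e$ sums directly; both reach the same cancellation.
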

 \begin{proof}
 
 \begin{align*}
 \sum\limits_{\substack{a=1\\(a,n^s)_s=1}}^{n^s} (a-1,n^s)_s&=  \sum\limits_{\substack{a,b=1\\(a,n^s)_s=1\\a-1\equiv b(\text{mod }n^s)}}^{n^s} (b,n^s)_s
  \\&= \sum\limits_{\substack{a,b=1\\(a,n^s)_s=1}}^{n^s}  \sum\limits_{\substack{a-1\equiv b(\text{mod }n^s)}} (b,n^s)_s
 \\&=  \sum\limits_{\substack{a,b=1\\(a,n^s)_s=1}}^{n^s} (b,n^s)_s \frac{1}{n^s} \sum\limits_{\substack{j=1}}^{n^s}e^{\frac{2\pi ij(1-a+b)}{n^s}}.
\end{align*} 
 To arrive at the last line above, we used the fact that $\frac{1}{d^s}\sum\limits_{\substack{k=1}}^{d^s}e^{\frac{2\pi i mn}{d^s}}=1$ if $d^s|n$.
Continuing with the computation, we get
 \begin{align*}
\sum\limits_{\substack{a=1\\(a.n^s)_s=1}}^{n^s} (a-1,n^s)_s &=  \frac{1}{n^s} \sum\limits_{\substack{j(\text{mod }n^s)}} e^{\frac{2\pi i j}{n^s}}\sum\limits_{\substack{b(\text{mod }n^s)}}(b,n^s)_s e^{\frac{2\pi ibj}{n^s}}\sum\limits_{\substack{a=1\\(a,n^s)_s=1}}^{n^s}e^{\frac{2\pi ia(-j)}{n^s}}\\
 &= \frac{1}{n^s} \sum\limits_{\substack{j(\text{mod }n^s)}} e^{\frac{2\pi i j}{n^s}}\sum\limits_{\substack{b(\text{mod }n^s)}}(b,n^s)_s e^{\frac{2\pi ibj}{n^s}}c_n^s(j).
 \end{align*}

Now let $(j,n^s)_s=\delta^s\Rightarrow \delta^s\mid n^s, \delta^s\mid j
 \Rightarrow \delta^s\mid n^s, j=t\delta^s$ for some integer $t$.
 Also $(\frac{j}{\delta^s}, \frac{n^s}{\delta^s} )_s=1$. That is $(t, \frac{n^s}{\delta^s} )_s=1$.
 
 Since $\frac{n^s}{(j,n^s)_s}=\frac{n^s}{\delta^s}=(\frac{n}{\delta})^s$, by property CRS2 of the Cohen-Ramanujan sum given in section 2 and Lemma (\ref{lem3}), we get

 \begin{align*}
 \sum\limits_{\substack{a=1\\(a.n^s)_s=1}}^{n^s} &(a-1,n^s)_s\\&=\frac{1}{n^s} \sum\limits_{\substack{t=1\\(t, \frac{n^s}{\delta^s} )_s=1\\\delta^s\mid n^s}}^{\frac{n^s}{\delta^s}} e^{\frac{2\pi i t\delta^s}{n^s}}\sum\limits_{\substack{d^s\mid \delta^s}}d^s\Phi_s(\frac{n^s}{d^s})\frac{\Phi_s(n^s)\mu(\frac{n}{\delta})}{\Phi_s(\frac{n^s}{\delta^s})}
 \\&=\frac{\Phi_s(n^s)}{n^s} \sum\limits_{\substack{\delta^s\mid n^s}}\frac{\mu(\frac{n}{\delta})}{\Phi_s(\frac{n^s}{\delta^s})}\sum\limits_{\substack{d^s\mid \delta^s}}d^s\Phi_s(\frac{n^s}{d^s})\sum\limits_{\substack{t=1\\(t, \frac{n^s}{\delta^s} )_s=1}}^{\frac{n^s}{\delta^s}} e^{\frac{2\pi i t\delta^s}{n^s}}
 \\&=\frac{\Phi_s(n^s)}{n^s} \sum\limits_{\substack{\delta^s\mid n^s}}\frac{\mu(\frac{n}{\delta})}{\Phi_s(\frac{n^s}{\delta^s})}\sum\limits_{\substack{d^s\mid \delta^s}}d^s\Phi_s(\frac{n^s}{d^s})c_{\frac{n}{\delta}}^s(1)
  \\&=\frac{\Phi_s(n^s)}{n^s} \sum\limits_{\substack{\delta^s\mid n^s}}\frac{(\mu(\frac{n}{\delta}))^2}{\Phi_s(\frac{n^s}{\delta^s})}\sum\limits_{\substack{d^s\mid \delta^s}}d^s\Phi_s(\frac{n^s}{d^s})\text{ since } \mu(\frac{n}{\delta}) = c_{\frac{n}{\delta}}^s(1) \text{ (by CRS4)}.
 \end{align*}
 
 Now $d^s\mid \delta^s, \delta^s\mid n^s\Rightarrow n^s=\delta^s h^s=d^sk^sh^s$ so that

\begin{align*}
 \sum\limits_{\substack{a=1\\(a,n^s)_s=1}}^{n^s} (a-1,n^s)_s&=\frac{\Phi_s(n^s)}{n^s} \sum\limits_{\substack{n^s=d^sk^sh^s}} d^s\Phi_s(k^sh^s)\frac{(\mu(h))^2}{\Phi_s(h^s)}\\
&=\frac{\Phi_s(n^s)}{n^s} \sum\limits_{\substack{n^s=d^sm^s}} d^s\Phi_s(m^s)\sum\limits_{\substack{m=kh}}\frac{(\mu(h))^2}{\Phi_s(h^s)}
   \\&=\frac{\Phi_s(n^s)}{n^s} \sum\limits_{\substack{n^s=d^sm^s}} d^s\Phi_s(m^s)\frac{m^s}{\Phi_s(m^s)} \hspace{10pt}\text{(by Lemma \ref{lem1})}
   \\& =\frac{\Phi_s(n^s)}{n^s} \sum\limits_{\substack{n^s=d^sm^s}} d^s{m^s}
     \\&=\frac{\Phi_s(n^s)}{n^s} \sum\limits_{\substack{d^s|n^s}}n^s
           \\&={\Phi_s(n^s)} \tau_s(n^s)
\end{align*}
which is what we claimed.

\end{proof}

\section{Acknowledgements}

The first author thanks the University Grants Commission of India for providing financial support for carrying out research work through their Senior Research Fellowship (SRF) scheme.

\end{document}